\theoremstyle{plain}
\newtheorem{theorem}{Theorem}[section]
\newtheorem{proposition}[theorem]{Proposition}
\theoremstyle{definition}
\newtheorem{definition}[theorem]{Definition}
\newtheorem{remark}[theorem]{Remark}
\newcommand{\R}{{\mathbb R}}
\newcommand{\eps}{\varepsilon}
\newcommand{\sgn}{\text{sgn}}
\newcommand{\PP}{\mathbb{P}} 
\newcommand{\EE}{\mathbb{E}} 
\begin{document}  

\title{On  Khintchine type inequalities for $k$-wise independent Rademacher random variables}
\footnote{BP is pleased to acknowledge the support of a University of Alberta start-up grant and National Sciences and Engineering Research Council of Canada Discovery Grant number 412779-2012.}
\author{Brendan Pass and Susanna Spektor}
\date{}

\address{Brendan Pass 
\noindent Address: University of Alberta, Edmonton, AB, Canada, T6G2G1}
\email{pass@ualberta.ca}

\address{Susanna Spektor
	\noindent Address: Sheridan college Institute of Technology and Advanced Learning, 4180 Duke of York Blvd., Mississauga, ON, Canada, L5B0G1}
\email{susanna.spektor@sheridancollge.ca}


\maketitle

\begin{abstract}
We consider Khintchine type inequalities on the $p$-th moments of vectors of $N$  $k$-wise independent Rademacher random variables.   We show that an analogue of Khintchine's inequality holds, with a constant $N^{1/2-k/2p}$, when $k$ is even.  We then show that this result is sharp for $k=2$; in particular, a version of Khintchine's inequality  for sequences of pairwise Rademacher variables \emph{cannot} hold with a constant independent of $N$.   We also characterize the cases of equality and show that, although the vector achieving equality is not unique, it is unique (up to law) among the smaller class of exchangable vectors of pairwise independent  Rademacher random variables. As a fortunate consequence of our work, we obtain similar results for $3$-wise independent vectors.


\medskip

\noindent 2010 Classification: 46B06, 60E15
%

\noindent Keywords:
Khintchine inequality,  Rademacher random variables,
$k$-wise independent random variables.
\end{abstract}




\setcounter{page}{1}

\section{Introduction}

This short note concerns Khintchine's inequality, a classical theorem in probability, with many important applications in both probability and analysis
(see \cite{Garlin, Kah, LT, MS, PShir} among others). It states that  the $L_p$ norm of the weighted sum of
independent Rademacher random variables is controlled by its $L_2$ norm; a precise statement follows.
We say that $\eps _0$ is a Rademacher random variable if
$\PP(\varepsilon_0=1)=\PP(\varepsilon_0=-1)=\displaystyle{\tfrac 12}$.
Let $\bar \varepsilon_i$, $1\leq i\leq N$,  be independent
copies of $\varepsilon_0$ and $a \in \R^{N}$.
Khintchine's inequality
(see, for example, Theorem 2.b.3 in \cite{LT} , Theorem 12.3.1 in \cite{Garlin} or the original work of Khintchine \cite{Khintchine23})
states that, for any $p > 0$
\begin{align}\label{1}
 B(p) \left(\EE\left|\sum_{i=1}^{N}a_i\bar \varepsilon_i\right|^2\right)^{\frac 12}=B(p)||a||_2\leq \left( \EE\left|\sum_{i=1}^{N}a_i\bar \varepsilon_i\right|^p\right)^{\frac 1p}\leq
 C(p) \, \|a\|_{2}=C(p) \left(\EE\left|\sum_{i=1}^{N}a_i\bar \varepsilon_i\right|^2\right)^{\frac 12}.
\end{align}

We will mostly be interested in the upper Khintchine inequality; that is, the second inequality in \eqref{1}. Note here that the upper constant $C(p)$ depends only on $p$; in particular, it does not depend on $N$.  In what follows, we take $C(p)$ to be the best possible constant in \eqref{1}.  This value is in fact known explicitly \cite{H}:
\[ C(p) = \left\{
  \begin{array}{l l}
   1 & \quad0<p\leq 2\\
    \frac{\sqrt{2}\Gamma(\frac{p+1}{2})}{\sqrt{\pi}^{1/p}} & \quad p>2.
  \end{array} \right.\]

 It is natural to ask whether the independence condition can be relaxed; indeed, random vectors with dependent coordinates arise in many problems in probability and analysis (see e.g. \cite{G} and the references therein).  In this short paper, we are interested in what can be said when the independence assumption on the coordinates is relaxed to pairwise (or, more generally, $k$-wise)  independence.  

\begin{definition} We call an $N$-tuple $\varepsilon=\{ \varepsilon_i\}_{i= 1}^N$ of Rademacher random variables a \textit{Rademacher vector}, or (finite) \textit{Rademacher sequence.}
For a fixed non-negative integer $k$, a Rademacher vector is called \emph{$k$-wise independent} if  any  subset  $\{\varepsilon_{i_1}, \varepsilon_{i_2}, \ldots, \varepsilon_{i_k}\}$ of size $k$ is mutually independent.

\end{definition}

When $k=2$ in the preceding definition, we will often use the terminology \emph{pairwise independent} in place of $2$-wise independent.  For more on $k$-wise independent sequences and their construction, see, for example \cite{D1, R1, R2}.

As it will be useful in what follows, we note that instead of random variables, it is equivalent to consider probability measures $P$ on the set $\{-1,1\}^N$, where $P=law(\varepsilon)$.  The condition that $\varepsilon$ is a Rademacher vector is then equivalent to the condition that the projections $law(\varepsilon_i)$ of $P$ onto each copy of $\{-1,1\}$ are all equal to $P_1:=\frac{1}{2}[\delta_{-1} +\delta_1]$.  The $k$-wise independence condition is equivalent to the condition that the projections $law(\varepsilon_{i_1},\ldots,\varepsilon_{i_k})$  of $P$ onto each $k$-fold product $\{-1,1\}^k$ is product measure $\otimes^kP_1$.

An interesting general line of research in probability aims to understand which of the many known properties of mutually independent sequences carry over to the $k$-wise independent setting; how much independence is actually needed to assert various properties? Some results,  including the second Borel-Cantelli lemma and the strong law of large numbers (see, for instance, \cite{Etemadi81} and \cite{law}) hold true for pairwise independent sequences, whereas others, such as the central limit theorem, do not.
We found it surprising that little seems to be known about Khintchine's inequality for $k$-wise independent sequences (except when $k \geq p$, as we discuss briefly below).

It is therefore natural to ask whether Khintchine's inequality holds for $k$-wise independent Rademacher random variables, and, if not, to understand how badly it fails.  More precisely, we  focus on the upper Khintchine inequality and define


\begin{equation}\label{constant}
C(N,p,k) = \sup_{\substack{a\in \mathbb{R}^N: ||a||_2 =1 \\ \varepsilon \text{ is a k -wise independent Rademacher vector}} }\left(\EE \left|\sum_{i=1}^N a_i\varepsilon_i\right|^p\right)^{1/p}.
\end{equation}
The questions we are interested in can then be formulated as:
\begin{enumerate}
\item[1.] Is $C(N,p,k)$ bounded as $N \rightarrow \infty$, for a fixed $p >k$ ?
\item[2.] If not, what is the growth rate of $C(N,p,k)$?
\end{enumerate}
Note that the $C(N,p,k)$ form a monotone decreasing sequence in $k$, as the $k$-dependence constraint becomes increasingly stringent as $k$ grows. 
Note that, as mutual independence implies $k$-wise independence for any $k$, we have $C(N,p,k) \geq C(p)$, where $C(p)$ is the best constant in the classical Khintchine inequality \eqref{1}. 

Some properties of $C(N,p,k)$ are easily discerned. For example, it is straightforward to see that $C(N,2,k) =1$.   
Let us also mention that, when $p$ is an even integer, and $k  \geq p$, it is actually a straightforward calculation to show that $C(N,p,k) =C(p) $ is independent of $N$ (that is, Khintchine's inequality for $k$-wise independent random variables holds with the same constant as in the independence case). 

For $k <p$ and even, we first show that $C(N,p,k) \leq  C(k)^{k/p} N^{1/2 -k/2p}$, by combining a standard interpolation argument with the classical Khintchine inequality and the observation above.  This provides some information on the second question above for general $k$.

We then focus on the $k=2$ case.  We prove that for $p \geq 2$ and $N$ even, $C(N,p,2) = N^{1/2-1/p}$, providing a negative answer to the first question above.   We construct an explicit pairwise independent Rademacher sequence satisfying the equality.  Finally, we characterize the cases of equality, and prove that although this equality may be achieved by multiple Rademacher vectors, the one we construct is the unique exchangeable equality case (up to law).

As a fortunate consequence of our work here, we obtain analogous results for $k=3$. Understanding the $k \geq 4$ case remains an interesting open question.  


\section{A general estimate on $C(N,p,k)$}
We begin by establishing an upper bound on $C(N,p,k)$ via a straightforward interpolation argument.
\begin{proposition}\label{prop: general bound}
For all $p\geq k \geq 2$ and $k$ even, we have $C(N,p,k) \leq C(k)^{k/p} N^{1/2 -k/2p}$.
\end{proposition}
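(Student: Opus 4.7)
The plan is to interpolate between a direct bound on the $k$-th moment (which, by $k$-wise independence, agrees with the independent case and so is controlled by the classical Khintchine inequality) and a crude uniform bound on $|S|$.

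Fix $a \in \mathbb{R}^N$ with $\|a\|_2 = 1$ and a $k$-wise independent Rademacher vector $\varepsilon$, and set $S := \sum_{i=1}^N a_i \varepsilon_i$. The first step is to observe that, since $k$ is even,
\[
\mathbb{E}|S|^k \;=\; \mathbb{E}\bigl(S^k\bigr) \;=\; \sum_{i_1, \ldots, i_k} a_{i_1} \cdots a_{i_k}\, \mathbb{E}\bigl[\varepsilon_{i_1} \cdots \varepsilon_{i_k}\bigr],
\]
and each expectation $\mathbb{E}[\varepsilon_{i_1} \cdots \varepsilon_{i_k}]$ depends only on the joint law of at most $k$ of the coordinates. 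By the $k$-wise independence hypothesis, this joint law is product measure, exactly as in the fully independent case. Hence $\mathbb{E}|S|^k$ coincides with the value it would take for an i.i.d.\ Rademacher sequence, and so by the classical Khintchine inequality \eqref{1} applied with exponent $k$,
\[
\mathbb{E}|S|^k \;\leq\; C(k)^k \, \|a\|_2^k \;=\; C(k)^k.
\]

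Next, I would use the trivial uniform bound: by the Cauchy-Schwarz inequality,
\[
|S| \;\leq\; \sum_{i=1}^N |a_i| \;\leq\; \sqrt{N}\, \|a\|_2 \;=\; \sqrt{N}
\]
pointwise. Interpolating via the elementary inequality $|S|^p = |S|^k \cdot |S|^{p-k} \leq N^{(p-k)/2} |S|^k$ (which uses $p \geq k$), taking expectations gives
\[
\mathbb{E}|S|^p \;\leq\; N^{(p-k)/2}\, \mathbb{E}|S|^k \;\leq\; N^{(p-k)/2}\, C(k)^k.
\]
Taking $p$-th roots yields $(\mathbb{E}|S|^p)^{1/p} \leq C(k)^{k/p} N^{1/2 - k/(2p)}$, and since the bound is independent of the choice of $a$ and $\varepsilon$, taking the supremum proves the proposition.

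There is no real obstacle here: the argument is essentially a single interpolation step. The only point that requires any thought is the first one, namely the observation that $\mathbb{E}(S^k)$ is insensitive to the difference between $k$-wise and full independence when $k$ is even, since expanding the $k$-th power only produces monomials of degree $k$ in the $\varepsilon_i$'s. This is also precisely where the hypothesis that $k$ is even is used, so that $|S|^k = S^k$ and one can apply this monomial expansion directly rather than working with $|S|$.
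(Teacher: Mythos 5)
Your proof is correct and follows essentially the same route as the paper: the $k$-th moment is shown to agree with the i.i.d.\ case via the monomial expansion and $k$-wise independence (then bounded by classical Khintchine), and the passage from the $k$-th to the $p$-th moment uses the pointwise bound $|S|\leq \|a\|_1\leq \sqrt{N}\|a\|_2$ from Cauchy--Schwarz; the paper phrases this last step as H\"older's inequality $\|f\|_p\leq \|f\|_k^{k/p}\|f^{p-k}\|_\infty^{1/p}$, which is the same interpolation you carry out directly.
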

\begin{proof}
Let $\epsilon=(\epsilon_1,...,\epsilon_N)$ be a $k$-wise independent Rademacher vector of length $N$, and $a=(a_1,...,a_N) \in \mathbb{R}^N$.  Set $f=|\sum_{i=1}^N a_i\epsilon_i|$, so that $f$ is a function on the underlying probability space.  Writing $f^p= f^kf^{p-k}$, we apply Holder's inequality to get
\begin{equation}\label{eqn: holder}
||f||_{p} \leq ||f||_k^{k/p} ||f^{p-k}||_{\infty}^{1/p}
\end{equation}
Now, note that the multinomial theorem implies

$$
||f||_k^k =\mathbb{ E}[|\sum a_i\epsilon_i|^k] =\sum_{j_1+j_2+...+j_m =k} \frac{k!}{j_1!j_2!....j_m!} \mathbb E\Big(\Pi_{i=1}^m (a_i\epsilon_i)^{j_i}\Big),
$$ 
As each term $\Pi_{i=1}^m (a_i\epsilon_i)^{j_i}$ contains only at most $k$ distinct $\epsilon_i$, and the distribution is $k$-wise independent, the expected value is identical to what we would obtain from a mutually independent series, $\bar \epsilon$, and so we have, by the classical Khintchine inequality,
$$
||f||_k^k =\mathbb E[|\sum a_i\epsilon_i|^k] =\mathbb E[|\sum a_i\bar \epsilon_i|^k] \leq C(k)^k||a||_2^k.
$$ 

On the other hand,

\begin{equation}\label{eqn:c-s}
||f^{p-k}||_{\infty}^{1/p} \leq ||a||_1^{1-k/p} \leq [||a||_2 N^{1/2} ]^{1-k/p}=N^{1/2-k/2p}||a||_2^{1-k/p}
\end{equation}
where the last inequality is by Cauchy-Schwartz. Combining these, we have

$$
||f||_{p} \leq  C(k)^{k/p}||a||_2^{k/p}N^{1/2-k/2p}||a||_2^{1-k/p}= C(k)^{k/p}N^{1/2-k/2p}||a||_2
$$
which implies the desired result.
\end{proof}
\section{Pairwise Independence: the precise value of $C(N,p,2)$.}
The following Theorem provides a negative  answer to the first question in the introduction when $k=2$, and provides the precise answer to the second question in the same case.  In particular, it shows that the bound  $C(N,p,2) \leq N^{1/2 -1/p}$ from the previous proposition is sharp for even $N$.
\begin{theorem}\label{main}
Let $N=2n$ be even.  There exists a pairwise independent Rademacher vector $\varepsilon=(\varepsilon_1,...,\varepsilon_N)$  such that, for  $a=(1,1,...,1) \in \mathbb{R}^N$  and  all $p \geq 2$,

\begin{equation}\label{theorem}
\left(\EE \left|\sum_{i=1}^N a_i\varepsilon_i\right|^p\right)^{1/p}= N^{1/2-1/p} \|a\|_2
\end{equation}

Consequently, $C(N, p, 2) = N^{1/2-1/p}$. 

\end{theorem}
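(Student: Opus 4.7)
The plan is to exhibit an explicit pairwise independent Rademacher vector whose signed sum $S=\sum_{i=1}^N\varepsilon_i$ is concentrated on exactly two values: $0$ and $\pm N$. With $a=(1,\ldots,1)$ one has $\|a\|_2=\sqrt N$, so the target identity \eqref{theorem} amounts to showing $\EE|S|^p=N^{p-1}$. If $|S|$ takes only the values $N$ (with some probability $q$) and $0$, then pairwise independence forces $\EE S^2=N$, hence $qN^2=N$, i.e.\ $q=1/N$, and then automatically $\EE|S|^p=N^p\cdot(1/N)=N^{p-1}$ for every $p\geq 2$. So the problem reduces to constructing the law.

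My construction is the following mixture on $\{-1,1\}^N$:
\begin{equation*}
P=\frac{1}{2N}\delta_{(1,1,\ldots,1)}+\frac{1}{2N}\delta_{(-1,-1,\ldots,-1)}+\Bigl(1-\frac{1}{N}\Bigr)U,
\end{equation*}
where $U$ is the uniform probability measure on the set of $\{-1,1\}$-vectors with exactly $n=N/2$ coordinates equal to $+1$. Under $P$, $S=\pm N$ with probability $1/N$ and $S=0$ with probability $1-1/N$, which is exactly the distribution analyzed in the previous paragraph; the computation of $\EE|S|^p=N^{p-1}$ is then immediate.

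The substantive verification is that $P$ has Rademacher marginals and is pairwise independent. The marginals are Rademacher by the obvious $\varepsilon \leftrightarrow -\varepsilon$ symmetry of all three summands. For pairwise independence I need to check $\PP(\varepsilon_i=1,\varepsilon_j=1)=1/4$ for $i\neq j$. The two atoms contribute $\tfrac{1}{2N}$ (from the all-ones point) and $0$ (from the all-minus-ones point). The uniform measure on balanced vectors contributes $\binom{N-2}{n-2}/\binom{N}{n}=\tfrac{n(n-1)}{N(N-1)}=\tfrac{N-2}{4(N-1)}$. Weighting by $1-1/N=(N-1)/N$ and adding gives $\tfrac{1}{2N}+\tfrac{N-2}{4N}=\tfrac14$, as required. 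This is the main (though brief) calculation of the proof.

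Finally, the lower bound $C(N,p,2)\geq N^{1/2-1/p}$ follows from the construction above applied to the definition \eqref{constant}, since $\|a\|_2=\sqrt N$ gives the normalized vector $a/\|a\|_2$ as a feasible direction. The matching upper bound $C(N,p,2)\leq N^{1/2-1/p}$ is exactly Proposition~\ref{prop: general bound} specialized to $k=2$ (using $C(2)=1$). Combining these yields $C(N,p,2)=N^{1/2-1/p}$. The only real subtlety is organizing the pairwise independence check cleanly; no analytic difficulty arises, since the Minkowski-type interpolation used for the upper bound is already in place.
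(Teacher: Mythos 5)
Your construction is exactly the measure used in the paper ($\tfrac{1}{2N}$ on each of the two constant vectors plus mass $1-\tfrac1N$ uniformly on the balanced vectors), and your verification of pairwise independence and the computation $\EE|S|^p=N^{p-1}$, together with invoking Proposition~\ref{prop: general bound} with $k=2$ for the matching upper bound, is the same argument the paper gives. The proof is correct and essentially identical to the paper's.
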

\begin{proof}
We will construct the probability measure $P=law(\varepsilon)$.

We define $P=\frac{1}{N} P_a+ \frac{N-1}{N} P_b$ where $P_a=\frac{1}{2}[\delta_{1,1,1,\ldots,1} +\delta_{-1,-1,-1,\ldots,-1}]$ is uniform measure on the two points $(1,1,1,\ldots,1), (-1,-1,\ldots,-1) \in \{-1,1\}^N$ and $P_b$ is uniform measure on the set of all points with an equal number of $1$'s and $-1$'s; that is, points which are permutations of $\{{\underbrace{1, 1, \ldots, 1}_{N/2 \, \textit{of them}}\underbrace{-1, -1, \ldots, -1}_{N/2 \, \textit{of them}}}\}$.
We first verify that this is a pairwise independent probability measure; that is, that it's twofold marginals are $\frac{1}{4}(\delta_{1,1} +\delta_{1,-1} +\delta_{-1,1} +\delta_{-1,-1})$.  By symmetry between the coordinates, it suffices to verify this fact for the projection $P_2$ on the first two copies of $\{-1,1\}$.  We have

$$
P_2(1,1) = \frac{1}{N}P_a(1,1,1,\ldots,1) +\frac{N-1}{N}P_b\{\varepsilon:(\varepsilon_1,\varepsilon_2)=(1,1)\}.
$$
Now, $P_a(1,1,1,\ldots,1) =\frac{1}{2}$, and it  is easy to see that $P_b\{\varepsilon:(\varepsilon_1,\varepsilon_2)=(1,1)\} =\frac{N/2-1}{2(N-1)}$, implying
$$
P_2(1,1) = \frac{1}{2N} +\frac{(N-1)(N/2-1)}{2N(N-1)}=\frac{1}{4}.
$$
Similar calculations imply $P_2(1,-1) =  P_2(-1,1)=P_2(-1,-1)=\frac{1}{4}$, and so $P$ is pairwise independent.

Now, letting $\varepsilon =(\varepsilon_1,...\varepsilon_N)$ be a random variable with $law(\varepsilon) =P$, and noting that $ |\sum_{i=1}^N\varepsilon_i|^p$ is $0$ for points in the support of $P_b$ and $N$ for points in the support of $P_a$, we have

$$
\EE \left|\sum_{i=1}^N\varepsilon_i\right|^p =\frac{1}{N}N^p=N^{p-1}
$$
Noting that $||a||_2 =\sqrt{N}$, it follows that
$$
\left[\EE\left|\sum_{i=1}^N\varepsilon_i\right|^p\right] ^{1/p}=N^{1-1/p}=\sqrt{N} N^{1/2-1/p}=||a||_2N^{1/2-1/p}
$$
\end{proof}

We record next a partial uniqueness result on the $a$ and $\varepsilon$ giving equality in \eqref{theorem}.  The result establishes that the $|a_i|$ must all be equal, and that the support of $law(\varepsilon)$ must be as in the preceding proposition, up to permutation of the signs of the $a_i$.  If, in addition, $\varepsilon$ is exchangeable, its law is uniquely determined.
\begin{proposition}\label{prop: uniqueness}
Suppose $a'=(a_1',a_2',...,a_N') \in \mathbb{R}^N$ and $\varepsilon'=(\varepsilon_1',...\varepsilon_N')$ satisfy $\left(\EE \left|\sum_{i=1}^N a_i'\varepsilon_i'\right|^p\right)^{1/p}= N^{1/2-1/p} \|a'\|_2$, where $N$ is even, $\varepsilon'$ is pairwise independent and $p > 2$.  Then we must have $|a_i'|=c$ for some constant $c$, for all $i$.  Moreover,  almost surely, either:
\begin{enumerate}
\item[(a)]  $\varepsilon'_i =\sgn(a'_i)$ for all $i$,
\item[(b)] $\varepsilon'_i=-\sgn(a_i')$ for all $i$, 
\\or 
\item[(c)]  $\varepsilon'_i =\sgn(a_i')$ for exactly half of the $i$ (so that $\sum_{i=1}^Na_i'\epsilon_i'=0$).

\end{enumerate}
 Furthermore, we have 
$$
P'\{ \varepsilon'_i =\sgn(a_i') \text{ for all }i \} =P'\{ \varepsilon'_i =-\sgn(a_i') \text{ for all }i\}  =\frac{1}{2N},
$$
where $P'=law(\varepsilon')$.

Finally, if in addition $\varepsilon'$ is exchangeable and $N\geq 6$, we have $P'=P$, where $P=law(\varepsilon)$ from the preceding theorem.  
\end{proposition}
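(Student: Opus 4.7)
The plan is to trace the chain of inequalities in the proof of Proposition \ref{prop: general bound} specialized to $k = 2$, identify the equality conditions, and then use a short swap argument to handle the exchangeable case.

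Set $f = |\sum_i a_i' \varepsilon_i'|$. For any pairwise independent Rademacher vector, $\EE f^2 = \|a'\|_2^2$ holds automatically (since $\EE[\varepsilon_i' \varepsilon_j'] = \delta_{ij}$), so under the equality hypothesis the remaining three inequalities from the proof of Proposition \ref{prop: general bound} must all be saturated: the Cauchy--Schwarz bound $\|a'\|_1 \leq \sqrt{N}\|a'\|_2$, the trivial bound $\|f\|_\infty \leq \|a'\|_1$, and the H\"older step $\EE[f^p] \leq \|f\|_\infty^{p-2}\, \EE f^2$. Equality in Cauchy--Schwarz forces $|a_i'| = c$ for some $c > 0$; equality in the sup bound forces $\|f\|_\infty = Nc$; and equality in H\"older, which is where the assumption $p > 2$ genuinely enters, forces $f \in \{0, Nc\}$ almost surely. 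Since all $|a_i'|$ are equal, the identity $|\sum_i a_i' \varepsilon_i'| = Nc$ forces every summand $a_i'\varepsilon_i'$ to have the same sign, giving cases (a) and (b); the alternative $f = 0$ is case (c). The atomic probabilities then follow from two moment identities: $\EE f^2 = Nc^2$ combined with $f^2 \in \{0, N^2c^2\}$ yields $P'(A) + P'(B) = 1/N$, while $\EE[\sum_i a_i' \varepsilon_i'] = 0$ combined with the fact that $\sum_i a_i'\varepsilon_i'$ equals $Nc$ on $A$, $-Nc$ on $B$, and $0$ on $C$ gives $P'(A) = P'(B)$; hence $P'(A) = P'(B) = \frac{1}{2N}$.

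For the exchangeability claim I use a short swap argument. Write $s_i = \sgn(a_i')$ and suppose for contradiction that $s$ is non-constant, so there exist $i \neq j$ with $s_i \neq s_j$. Let $\pi = (i\ j)$ be the transposition and set $s' = (s_{\pi(1)}, \ldots, s_{\pi(N)})$; by exchangeability $P'(s') = P'(s) = \frac{1}{2N} > 0$, so $s'$ lies in the support of $P'$. Clearly $s' \neq s$, and since $s$ and $s'$ agree at every coordinate outside $\{i, j\}$ (a set of size $N - 2 \geq 4$), also $s' \neq -s$. The only remaining possibility is that $s'$ satisfies the case (c) relation $\sum_k a_k' s_k' = 0$; but a direct computation gives $c \sum_k s_k s_{\pi(k)} = c\bigl((N-2) + 2 s_i s_j\bigr) = c(N - 4)$, which is nonzero for $N \geq 6$, a contradiction. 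Hence $s = \pm(1, \ldots, 1)$, and since both the hypothesis and the conclusion are invariant under $a' \mapsto -a'$, I may assume $a' = c(1, \ldots, 1)$. The support of $P'$ is then $\{(1, \ldots, 1), (-1, \ldots, -1)\} \cup \{v \in \{-1,1\}^N : \sum_i v_i = 0\}$ with the two atoms of mass $\frac{1}{2N}$ each, and exchangeability forces $P'$ to be uniform on the $\binom{N}{N/2}$ vectors in the second set; matching the remaining mass $\frac{N-1}{N}$ recovers exactly $P$ from Theorem \ref{main}.

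The main obstacle I expect is the equality analysis in the first step: one has to verify carefully that H\"older's inequality, applied as in the proof of Proposition \ref{prop: general bound}, really forces the dichotomy $f \in \{0, \|f\|_\infty\}$ almost surely (rather than a weaker statement), and this is where $p > 2$ is essential. Once that dichotomy is in hand the moment identities are immediate, and the swap argument is completely elementary --- the only combinatorial input it needs is $N - 4 \neq 0$, which holds precisely because $N \geq 6$.
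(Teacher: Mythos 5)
Your proof is correct and follows essentially the same route as the paper: tracking equality through the H\"older and Cauchy--Schwarz steps of Proposition \ref{prop: general bound}, then an exchangeability/swap argument for the final claim (your transposed vector $s'$ coincides with the paper's configuration in which $\varepsilon'_1,\varepsilon'_2$ are flipped relative to $\sgn(a')$, since $s_i\neq s_j$, and your count $c(N-4)\neq 0$ is the paper's observation that $N-2\neq N/2$ for $N\geq 6$). The only cosmetic difference is that you extract $P'(A)+P'(B)=1/N$ from the second moment rather than the $p$-th moment; both work.
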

\begin{proof}
One can only have equality in \eqref{eqn:c-s} if each $|a_i|=c$ (to have equality in Cauchy-Schwartz), and if  either $\varepsilon'_i =\sgn(a'_i)$ for all $i$ or $\varepsilon'_i=-\sgn(a_i')$ for all $i$ with a positive probability (to have equality in the first inequality in\eqref{eqn:c-s}).  One can only have equality in  \eqref{eqn: holder} if the function $f$ takes on only two values (one of them being zero); taken together, then, we can only have equality in both \eqref{eqn: holder} and \eqref{eqn:c-s} if, almost surely, one of the conditions (a) - (c) holds.



Now, the fact that the $\epsilon_i'$ are Rademacher variables implies that $\EE(\sum_{i=1}^Na_i'\epsilon_i') =0$; as $\sum_{i=1}^Na_i'\epsilon_i'$ is either $0$, $Nc$ or $-Nc$ almost surely, we must have $\sum_{i=1}^Na_i'\epsilon_i' =\pm Nc$ with equal probability.   The constraint $\left(\EE \left|\sum_{i=1}^N a_i\varepsilon_i'\right|^p\right)^{1/p}= N^{1/2-1/p} \|a'\|_2$ then implies that each of these probabilities must be $\frac{1}{2N}$.

Turning to the final assertion, if $\epsilon'$ is exchangeable,  we claim that each of the $a'_i$ must then share the same sign.    To see this, note that if not, we can assume without loss of generality that $a'_1>0$ and $a'_2<0$.  As we have $P\{\varepsilon'_i=\sgn(a'_i)\text{ } \forall  i\} =\frac{1}{2N}$, symmetry implies that
 $$
 P\{\varepsilon'_1=-\sgn(a'_1)\text{, } \varepsilon'_2=-\sgn(a'_2) \text{ and }\varepsilon'_i=\sgn(a'_i) \text{ }\forall  i \geq 3\}=\frac{1}{2N} >0
 $$ 
 as well.  But  these values of $\epsilon'$, 
 do not satisfy any of the conditions (a) - (c) for $N \geq 6$.  This establishes the claim.

Therefore, we must have  $P(\epsilon_i'=1 \forall i)=P(\epsilon_i'=-1 \forall i) =\frac{1}{2N}$, and then symmetry implies that each $\epsilon'$ such that $\sum_{i=1}^N\epsilon_i'=0$ must have the same probability, which means that $law(\epsilon') =P$. 
\end{proof}
\begin{remark}
As  was pointed out to us by an anonymous referee on an earlier version of this manuscript, when $N=2^n$ is a power of $2$, an alternate construction can yield $C(N,2,p)=N^{1/2-1/p}$; let $(\bar \varepsilon_0,\bar \varepsilon_1,...,\bar \varepsilon_n)$ be a family of $n+1$ mutually independent Rademacher variables.  It is straightforward to see that the family $\{\epsilon_S:=\bar \epsilon_0\Pi_{i\in S}\bar \epsilon_i\}$ , where $S$ runs over the set of subsets of $\{1,2,....,n\}$, is a pairwise independent family of $2^n$ Rademacher random variables (here we have taken the convention that $\Pi_{i\in S}\epsilon_i=1$ when $S=\phi$ is empty).  It is straightforward to check that almost surely either each $\epsilon_S$ is $1$, each is $-1$ or exactly half are $1$, and the probability of each of the first two events is $\frac{1}{2^{n+1}}=\frac{1}{2N}$.  The claim follows.

This construction also shows that we do not have uniqueness up to $law(\epsilon)$ without the additional exchangeability condition, for $n \geq 3$.  To see this, note that the subset
$\{\bar\epsilon_0,\bar \epsilon_0 \bar \epsilon_1\bar \epsilon_2,\bar \epsilon_0\bar\epsilon_1\bar\epsilon_3,\bar \epsilon_0 \bar \epsilon_2\bar\epsilon_3 \}$ is not mutually independent (as the product of the first three elements is exactly the fourth).  On the other hand, the subset, $\{\bar\epsilon_0 ,\bar \epsilon_0 \bar \epsilon_1,\bar \epsilon_0\bar\epsilon_2,\bar \epsilon_0 \bar \epsilon_3 \}$ is mutually independent and so  $\{\epsilon_S\}_{S \in 2^{\{1,2,...,n\}}} $ is clearly not exchangeable.  
  Therefore, $law\{\epsilon_S \}\neq law(\epsilon)$.  
\end{remark}

\begin{remark}
It is straightforward to verify that the measure $P=law(\varepsilon)$ derived in  Theorem is in fact $3$-wise independent, and thus we immediately obtain analogues of the preceding results for $k=3$. In particular,

$$
C(N,p,3) = N^{1/2-1/p}.
$$


It is clear that for $p>k \geq 4$, the estimate in Proposition \ref{prop: general bound} cannot be sharp; as the argument involved applying Holder's inequality to the function $f$, it can only be sharp if $f$ takes on at most two values (one of them being 0).  However, a direct calculation verifies that if $f$ takes on less than three values, then $\varepsilon$ cannot be $4$-wise independent.  Whether or not the growth rate in $N$ of $C(N,p,k)$  is \emph{proportional} to $N^{1/2 -k/2p}$ (that is, whether $C(N,p,k) = K(p,k)N^{1/2 -k/2p}$ for some constant $K(p,k) >1$) when $k \geq 4$ is an interesting open question.
\end{remark}
\begin{remark}
Generally speaking, one can identify exchangeable,   $k$-wise independent random variables $X_1, \ldots, X_N$ on $\mathbb{R}$ having equal fixed marginals $P_1=law(X_i)$ with permutation symmetric probability measures $P=law(X_1, \ldots, X_N)$ on $\mathbb{R}^N$ whose $k$-fold marginals are $\otimes ^k P_1$.  The set of measures satisfying these constraints is a convex set, and identifying the set of extremal points, or vertices, of this set is an interesting and nontrivial question.

Proposition \ref{prop: uniqueness} tells us that   the measure  $law(\epsilon)$ we construct is the unique maximizer of the linear functional $law(\epsilon)\mapsto \EE(\sum_{i=1}^N\epsilon_i)$ on the convex set of exchangeable, pairwise independent Rademacher probability measures.  As a consequence, we have therefore identified an extremal point of this set.


\end{remark}






\begin{thebibliography}{50}

\bibitem{law}
D.~Andrews,
{\it Laws of Large Numbers for Dependent Non-Identically Distributed Random Variables
}
Econometric Theory, {\bf{4}},  (1988), 458--467.


\bibitem{D1}
Y.~Derriennic, A.~Klopotowski,
{\it Cinq variables al$\acute{\textmd{e}}$atoires, binaires,}
Institut Galil$\acute{\textmd{e}}$e, Universite Paris XIII, (1991), 1--38.
(1985), 109--117.


\bibitem{Etemadi81}
N. Etemadi
{\it An elementary proof of the strong law of large numbers} 
Z. Wahrsch. Verw. Gebiete 55 (1981), no.1, 119--122.


\bibitem{Garlin}
 D.~J.~H.~Garling,
{\it Inequalities: A Journey into Linear Analysis},
Cambridge University Press, Cambridge, 2007.





\bibitem{G}
 O.~Guedon,  P.~Nayar, T.~Tkocz,
{\it Concentration inequalities and geometry of convex bodies}, Extended notes of a course, Polish Academy of Sciences of Warsaw, to appear.
(http://perso-math.univ-mlv.fr/users/guedon.olivier/listepub.html)


\bibitem{H}
U.~Haagerup
{\it The best constants in the Khintchine inequality}
Studia Math. 70 (1981), no. 3, 231–283 (1982)

\bibitem{Khintchine23}
A. Khintchine
{\it \"Uber dyadische {B}r\"uche}
Math Z. 18 (1923), no. 1, 109--116.


\bibitem{Kah} J.P.~Kahane,
{\it Some random series of functions},  Second edition.
Cambridge Studies in Advanced Mathematics, 5,
Cambridge University Press, Cambridge, 1985.



 \bibitem{LT}
J.~Lindenstrauss, L.~Tzafriri,
{\it Classical Banach Spaces I and II,}
 Springer, 1996.



\bibitem{MS}
V.~D.~Milman, G.~Schechtman,
{\it Asymptotic theory of finite-dimensional normed spaces.}
With an appendix by M. Gromov. Lecture Notes in Math., 1200.
Springer-Verlag, Berlin, 1986.

\bibitem{P}
B.~Pass
{\it Multi-marginal optimal transport: theory and applications.}
To appear in  ESAIM: Math. Model. Numer. Anal.
\bibitem{PShir}
G.~Peskir, A.~N.~Shiryaev,
\emph{ The inequalities of Khintchine and expanding sphere of their action,}
Russian Math. Surveys
\textbf{50} 5 (1995), 849--904.



\bibitem{R1}
J.~Robertson,
{\it Independence and fair coin-tossing,}
Math. Scientist,
(1985), 109--117.



\bibitem{R2}
J.~Robertson,
{\it A two state pairwise independent stationary process for which $x_1x_3x_5$ is dependent,}
Sankhy$\bar{\textmd{a}}$, Series A, {\bf{50}}, (1988), 171--183.
(1985), 109--117.
















\end{thebibliography}
\end{document}